\title{Some applications of the Lagrange inversion formula for the $k$-Fibonacci numbers}
\author{\sc Bakir FARHI \\
Laboratoire de Mathématiques appliquées \\
Faculté des Sciences Exactes \\
Université de Bejaia, 06000 Bejaia, Algeria \\[1mm]
\href{mailto:bakir.farhi@univ-bejaia.dz}{bakir.farhi@univ-bejaia.dz} \\[1mm]
\url{http://farhi.bakir.free.fr/}
}
\date{}
\let\up=\textsuperscript
\let\epsilon=\varepsilon
\def\C{{\mathbb C}}
\def\N{{\mathbb N}}
\def\Z{{\mathbb Z}}
\def\E{{\mathbb E}}
\def\as{\mathrm{As}}
\def\idem{\leavevmode\hbox to 10.6mm{\vrule height .63ex depth -.59ex
    width 10mm\hfill}}
\theoremstyle{plain}
\numberwithin{equation}{section}
\newtheorem{thm}{Theorem}[section]
\newtheorem{prop}[thm]{Proposition}
\newtheorem{coll}[thm]{Corollary}
\newtheorem{thmn}{Theorem}
\newtheorem{colln}[thmn]{Corollary}
\theoremstyle{definition}
\theoremstyle{remark}
\newtheorem{rmk}[thm]{Remark}
\newtheorem{rmks}[thm]{Remarks}
\begin{document}
\maketitle

\begin{abstract}
The aim of this paper consists of providing summation formulas for the $k$-Fibonacci numbers ($k \in \Z$, $k \geq 2$) and their asymptotic equivalents in terms of generalized binomial coefficients. Our main tools are the Lagrange inversion formula and one of its consequences due to Hermite.
\end{abstract}

\noindent\textbf{MSC 2020:} Primary 11B39; Secondary 11B83, 11B65, 11B37. \\
\textbf{Keywords:} $k$-Fibonacci numbers, Fibonacci numbers, linear recurrent sequences, binomial coefficients, the Lagrange inversion formula.

\section{Introduction and Notation}\label{sec1}
Throughout this paper, we let $\N$ denote the set of positive integers and $\N_0 := \N \cup \{0\}$ the set of nonnegative integers. Next, the (compositional) inverse function of a function $f$ is denoted by $f^{<-1>}$ and the coefficient of $z^n$ ($n \in \N_0$) in a power series $\varphi(z)$ is denoted by $[z^n] \varphi(z)$. Further, we let $\binom{a}{b}$ ($a , b \in \Z$) denote the generalized binomial coefficient defined by:
\begin{equation}\label{eq1}
\binom{a}{b} := \begin{cases}
\frac{a (a - 1) \cdots (a - b + 1)}{b!} & \text{if } b \geq 0 \\
\binom{a}{a - b} & \text{if } a \geq b \\
0 & \text{if } a < b < 0
\end{cases} .
\end{equation}
Using this definition, we easily cheek the following formulas:
\begin{align}
\binom{a}{b} & = \binom{a - 1}{b - 1} + \binom{a - 1}{b} ~~~~~~~~~~ (\forall (a , b) \in \Z^2 \setminus \{(0 , 0)\}) , \label{eq2} \\[1mm]
\binom{a}{b} & = \begin{cases}
(-1)^{b - 1} \binom{b - a - 1}{b} & \text{if } b \leq a < 0 \\
(-1)^b \binom{b - a - 1}{b}  & \text{else} 
\end{cases} ~~~~~~~~~~ (\forall (a , b) \in \Z^2) . \label{eq3}
\end{align}
For a given integer $k \geq 2$, the $k$-Fibonacci sequence is the integer sequence ${(F_n^{(k)})}_{n \in \N_0}$ given by the recurrence relation
\begin{equation}\label{eq4}
F_{n + k}^{(k)} = F_n^{(k)} + F_{n + 1}^{(k)} + \dots + F_{n + k - 1}^{(k)} ~~~~~~~~~~ (\forall n \in \N_0) ,
\end{equation}
with the initial values $F_0^{(k)} = F_1^{(k)} = \dots = F_{k - 2}^{(k)} = 0$ and $F_{k - 1}^{(k)} = 1$. The special case $k = 2$ corresponds to the usual Fibonacci sequence, simply denoted by ${(F_n)}_{n \in \N_0}$ and given by
$$
\left\{
\begin{array}{l}
F_0 = 0 , F_1 = 1 , \\
F_{n + 2} = F_n + F_{n + 1} ~~~~ (\forall n \in \N_0)
\end{array}
\right. .
$$
The above linear recurrence relation \eqref{eq4} of order $k$ for the $k$-Fibonacci sequence easily implies the following simpler linear recurrence relation of order $(k + 1)$:
\begin{equation}\label{eq5}
F_{n + k + 1}^{(k)} = 2 F_{n + k}^{(k)} - F_n^{(k)} ~~~~~~~~~~ (\forall n \in \N_0)
\end{equation}
(see \cite{hc} for the details). By relying on \eqref{eq5}, we easily check that we have
\begin{equation}\label{eq17}
\begin{split}
F_n^{(k)} & = 2^{n - k} ~~~~~~~~~~ \text{for } k \leq n < 2 k , \\
F_{2 k}^{(k)} & = 2^k - 1 .
\end{split}
\end{equation}

The $k$-Fibonacci sequence plays an important role in many areas of Mathematics, including Discrete Mathematics and Combinatorics. Here is a simple combinatorial interpretation: given an integer $k \geq 2$, the number of possible ways of writing a positive integer $n$ as a sum of numbers, all belonging to the set $\{1 , 2 , \dots , k\}$ (taking into account the permutations), is exactly $F_{n + k - 1}^{(k)}$. To find a closed form of $F_n^{(k)}$, the theory of linear recurrence sequences associates to the linear recurrence \eqref{eq4} the polynomial equation (known as its \textit{characteristic equation})
\begin{equation}\label{prin-eq1}
x^k = x^{k - 1} + x^{k - 2} + \dots + x + 1 . \tag{$\E_k$}
\end{equation}
Multiplying the both sides of \eqref{prin-eq1} by $(x - 1)$ and rearranging, we obtain the simpler polynomial equation
\begin{equation}\label{prin-eq2}
x^{k + 1} - 2 x^k + 1 = 0 , \tag{$\E_k'$}
\end{equation}
which is nothing else the characteristic equation associated to the linear recurrence \eqref{eq5}. The closed form of $F_n^{(k)}$ then depends on the complex roots of Equation \eqref{prin-eq1} and their multiplicities. Furthermore, in order to establish a simple asymptotic equivalent for $F_n^{(k)}$ (as $n \rightarrow + \infty$), it is necessary to locate the complex roots of Equation \eqref{prin-eq1} in the complex plane, a topic that has been investigated by several authors. Using the famous Rouch\'e theorem of complex analysis, Miles \cite{miles} proved that Equation \eqref{prin-eq1} has exactly $(k - 1)$ pairwise distinct complex roots in the open unit disk $D(0 , 1)$ of the complex plane and a unique complex root $\rho_k$ of modulus greater than $1$; precisely, $\rho_k$ is real and belongs to the interval $(1 , 2)$. In \cite{wol}, Wolfram showed that we have more precisely
\begin{equation}\label{eq6}
2 - \frac{2}{2^k} < \rho_k < 2 .
\end{equation}
For the special case $k = 2$, $\rho_k$ is nothing else the famous \textit{golden ratio}, commonly denoted by $\phi$ ($\phi = \frac{\sqrt{5} + 1}{2} = 1.618\dots$). In \cite{miller}, Miller founded an elementary way to prove the Miles result, which relies on Descartes's rules of signs. Denoting by $\alpha_{k , 1} , \alpha_{k , 2} , \dots , \alpha_{k , k} = \rho_k$ the complex roots of Equation \eqref{prin-eq1}, the theory of linear recurrence sequences shows that $F_n^{(k)}$ has the closed form $F_n^{(k)} = \sum_{\ell = 1}^{k} \lambda_{k , \ell} \alpha_{k , \ell}^n$ ($\forall n \in \N_0$), where the $\lambda_{k , \ell}$'s are complex numbers (independent on $n$) which can be calculated by relying on the initial values of the sequence ${(F_n^{(k)})}_{n}$. In \cite{sj}, Spickerman and Joyner showed that
$$
\lambda_{k , \ell} = \frac{\alpha_{k , \ell}^{k + 1} - \alpha_{k , \ell}^k}{\alpha_{k , \ell} \left[2 \alpha_{k , \ell}^k - (k + 1)\right]} ~~~~~~~~~~ (\forall \ell \in \{1 , 2 , \dots , k\}) ,
$$
and in \cite{dres}, Dresden simplified this to
$$
\lambda_{k , \ell} = \frac{\alpha_{k , \ell} - 1}{\alpha_{k , \ell} \left[2 + (k + 1) (\alpha_{k , \ell} - 2)\right]} = \frac{\alpha_{k , \ell} - 1}{\alpha_{k , \ell} \left[(k + 1) \alpha_{k , \ell} - 2 k\right]} ~~~~~~~~~~ (\forall \ell \in \{1 , 2 , \dots , k\}) .
$$
So we have for all $n \in \N_0$:
$$
F_n^{(k)} = \sum_{\ell = 1}^{k} \frac{\alpha_{k , \ell} - 1}{(k + 1) \alpha_{k , \ell} - 2 k} \,\alpha_{k , \ell}^{n - 1} .
$$
Then, since $\rho_k$ is the largest root of \eqref{prin-eq1} in modulus, we derive the following asymptotic equivalence (as $n \rightarrow + \infty$):
\begin{equation}\label{eq7}
F_n^{(k)} \sim_{+ \infty} \frac{\rho_k - 1}{(k + 1) \rho_k - 2 k} \, \rho_k^{n - 1} .
\end{equation}
Let us denote by $\as_n^{(k)}$ the expression of the right-hand side of \eqref{eq7}; that is
\begin{equation}\label{eq8}
\as_n^{(k)} := \frac{\rho_k - 1}{(k + 1) \rho_k - 2 k} \, \rho_k^{n - 1} 
\end{equation}
(for all integer $k \geq 2$ and all $n \in \N_0$). Next, for all integer $k \geq 2$, define
$$
\epsilon_k := 2 - \rho_k .
$$
From \eqref{eq6}, we have that $\epsilon_k \in (0 , 2^{1 - k}) \subset (0 , 1/2)$. Furthermore, from the fact that $\rho_k$ is a root of Equation \eqref{prin-eq2}, we derive that
\begin{equation}\label{eq9}
\epsilon_k = \frac{1}{(2 - \epsilon_k)^k} ,
\end{equation}
which is a crucial equality for our purpose.

The goal of this paper is first to represent $\rho_k^n$ and $\as_n^{(k)}$ as series involving binomial coefficients. Then, by truncating judiciously the series related to $\as_n^{(k)}$, we derive a representation of $F_n^{(k)}$ as a finite sum involving binomial coefficients. Our results 
are essentially obtained by employing the Lagrange inversion formula and one of its consequences due to Hermite (see \S \ref{sec2}). While some of these results have been previously pointed out by other authors, they were often accompanied by \textit{ad hoc} proofs, and in some instances, contained mistakes (see \S \ref{sec3} for detailed discussion).

\section{The Lagrange inversion formula}\label{sec2}
The Lagrange inversion formula is one of the most important formulas of combinatorics. In its general form, it can be stated as follows

\begin{thmn}[The Lagrange inversion formula]\label{t-lif}
Let $\alpha , z_0 \in \C$ and $f$ be a complex analytic function of $z$ at a neighborhood of $z_0$. Let also $w = w(z)$ be a complex analytic function of $z$ at a neighborhood of $z_0$, which is implicitly defined by{\rm:}
\begin{equation}\label{eq-lag}
w = z_0 + \alpha f(w) .
\end{equation}
Then for every complex analytic function $g$ of $z$ at a neighborhood of $z_0$, we have {\rm(}for $|\alpha|$ sufficiently small{\rm)}
$$
g(w) = g(z_0) + \sum_{n = 1}^{+ \infty} \frac{\alpha^n}{n!} \left[\left(\frac{d}{d z}\right)^{n - 1} \!\!\!\left(f^n g'\right)\right](z_0) .
$$
Taking in particular $g(z) = z$, we get {\rm(}for $|\alpha|$ sufficiently small{\rm)}
$$
w = z_0 + \sum_{n = 1}^{+ \infty} \frac{\alpha^n}{n!} \left[\left(\frac{d}{d z}\right)^{n - 1} \!\!\!\left(f^n\right)\right](z_0) .
$$
\end{thmn}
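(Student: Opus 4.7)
The plan is to prove this by a contour integration argument based on the residue theorem, viewing $w$ as the unique zero of $h(z) := z - z_0 - \alpha f(z)$ in a neighbourhood of $z_0$.

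First, I would fix a small circle $\gamma$ centred at $z_0$ on which $f$ is bounded. For $|\alpha|$ sufficiently small, $|\alpha f(z)| < |z - z_0|$ on $\gamma$, so Rouch\'e's theorem (applied to $z - z_0$ versus the perturbation $-\alpha f(z)$) furnishes a unique zero $w$ of $h$ inside $\gamma$; this zero is simple because $h'(w) = 1 - \alpha f'(w)$ is close to $1$, and the implicit function theorem gives the analyticity of $w$ in $\alpha$ near $0$.

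Second, applying the residue theorem to $g(z) h'(z)/h(z)$, whose only singularity inside $\gamma$ is a simple pole at $w$ with residue $g(w)$, yields
\[
g(w) \;=\; \frac{1}{2\pi i}\oint_\gamma \frac{g(z)\,\bigl(1 - \alpha f'(z)\bigr)}{z - z_0 - \alpha f(z)}\,dz.
\]
I would then expand $\frac{1}{z - z_0 - \alpha f(z)} = \sum_{n \geq 0} \alpha^n f(z)^n/(z-z_0)^{n+1}$, which converges uniformly on $\gamma$, and interchange sum and integral. The $n = 0$ contribution is $g(z_0)$ by Cauchy's formula, and after reindexing the coefficient of $\alpha^n$ for $n \geq 1$ becomes a difference of two contour integrals
\[
A_n - B_{n-1} \;:=\; \frac{1}{2\pi i}\oint_\gamma \frac{g(z) f(z)^n}{(z-z_0)^{n+1}}\,dz \;-\; \frac{1}{2\pi i}\oint_\gamma \frac{g(z) f'(z) f(z)^{n-1}}{(z-z_0)^n}\,dz.
\]

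The main (and essentially only nontrivial) step is to collapse this difference into a single clean expression. Here I would exploit the fact that $G(z) := g(z) f(z)^n/(z-z_0)^n$ is single-valued meromorphic inside the disc bounded by $\gamma$, so $\oint_\gamma G'(z)\,dz = 0$. Computing $G'$ explicitly (product and quotient rules) and rearranging the resulting identity gives $n(A_n - B_{n-1}) = \frac{1}{2\pi i}\oint_\gamma g'(z) f(z)^n/(z-z_0)^n\, dz$, which by Cauchy's formula for derivatives equals $\frac{1}{(n-1)!}\left[\left(\frac{d}{dz}\right)^{n-1}(f^n g')\right](z_0)$. Dividing by $n$ produces the claimed coefficient $\frac{1}{n!}\left[\left(\frac{d}{dz}\right)^{n-1}(f^n g')\right](z_0)$. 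The second displayed identity (the expansion of $w$) then follows as the special case $g(z) = z$, since then $g' \equiv 1$.
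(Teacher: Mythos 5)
Your proof is correct: Rouch\'e's theorem isolates the simple zero $w$ of $z - z_0 - \alpha f(z)$, the residue computation for $g(z)h'(z)/h(z)$ is right, the geometric expansion converges uniformly on $\gamma$ for $|\alpha|$ small, and the identity $n(A_n - B_{n-1}) = \frac{1}{2\pi i}\oint_\gamma g'(z)f(z)^n(z-z_0)^{-n}\,dz$ obtained from $\oint_\gamma G'(z)\,dz = 0$ correctly collapses the two integrals into the claimed coefficient via Cauchy's formula for derivatives. Note that the paper does not prove this theorem at all --- it cites it as classical and merely remarks that ``one simple way to prove'' it ``uses contour integration in the complex plane''; your argument is precisely that standard proof, supplied in full.
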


Note that the last part of this important theorem was proved in 1770 by Lagrange \cite{lag}, who applied it to solve the Kepler equation $y = x + \alpha \sin{y}$, while its generalization (i.e., the first part of Theorem \ref{t-lif}) was obtained by B\"urmann \cite{bur} in 1798. One simple way to prove Theorem \ref{t-lif} uses contour integration in the complex plane, while other more elementary approaches employ algebraic techniques on formal series (see e.g., \cite{com,stan}). From Theorem \ref{t-lif}, we derive the following interesting corollary, which is due to Hermite (see \cite{com}):

\begin{colln}[Hermite]\label{t-her}
Let $\varphi$ be a complex analytic function at a neighborhood of $0$ such that $\varphi(0) \neq 0$ and let $\psi$ be the compositional inverse function of the function $z \rightarrow \frac{z}{\varphi(z)}$ at a neighborhood of $0$ {\rm(}$\psi$ exists because $\left(\frac{z}{\varphi(z)}\right)'(0) = \frac{1}{\varphi(0)} \neq 0${\rm)}. Then for every complex function $g$, which is analytic at a neighborhood of $0$, we have
$$
\frac{z \cdot (g \circ \psi)(z)}{\psi(z) \cdot \left(\left(\frac{z}{\varphi(z)}\right)' \circ \psi(z)\right)} = \sum_{n = 0}^{+ \infty} z^n [z^n]\left(g(z) \varphi(z)^n\right) .
$$ 
\end{colln}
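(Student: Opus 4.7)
The plan is to specialise Theorem \ref{t-lif} to $z_0 = 0$, $\alpha = z$ (treated as a parameter), and $f = \varphi$. With these choices, the implicit equation \eqref{eq-lag} becomes $w = z\,\varphi(w)$, equivalently $z = w/\varphi(w)$, so the implicit function $w(z)$ of Theorem \ref{t-lif} coincides with $\psi(z)$. Applying the theorem to an arbitrary auxiliary analytic function $H$ and rewriting $\frac{1}{n!}\bigl[(d/dz)^{n-1}\bigl(\varphi^n H'\bigr)\bigr](0) = \frac{1}{n}\,[z^{n-1}]\!\bigl(H'(z)\varphi(z)^n\bigr)$, I obtain
\[
H(\psi(z)) \;=\; H(0) \;+\; \sum_{n \geq 1} \frac{z^n}{n}\,[z^{n-1}]\!\left(H'(z)\,\varphi(z)^n\right).
\]

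Next I would differentiate both sides with respect to $z$, shift the summation index $n \mapsto n+1$, and perform the substitution $H'(w) = g(w)/\varphi(w)$. This substitution is legitimate for \emph{every} function $g$ analytic near $0$, because $\varphi(0) \neq 0$ makes $g/\varphi$ analytic near $0$, hence it admits an analytic antiderivative $H$. The resulting identity reads
\[
\frac{g(\psi(z))\,\psi'(z)}{\varphi(\psi(z))} \;=\; \sum_{n \geq 0} z^n\,[z^n]\!\left(g(z)\,\varphi(z)^n\right),
\]
which is already the right-hand side we want.

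It then remains to identify the left-hand side above with the quotient $\dfrac{z\cdot(g\circ\psi)(z)}{\psi(z)\cdot\bigl((z/\varphi(z))'\circ\psi(z)\bigr)}$ appearing in the statement. Differentiating the implicit relation $\psi(z) = z\,\varphi(\psi(z))$ gives $\psi'(z)\bigl(1 - z\,\varphi'(\psi(z))\bigr) = \varphi(\psi(z))$, so that $\psi'(z)/\varphi(\psi(z)) = 1/\bigl(1 - z\,\varphi'(\psi(z))\bigr)$. A short calculation using the explicit derivative $(z/\varphi(z))' = (\varphi(z) - z\varphi'(z))/\varphi(z)^2$ together with $z\,\varphi(\psi(z)) = \psi(z)$ transforms $1 - z\,\varphi'(\psi(z))$ into $\tfrac{\psi(z)}{z}\bigl((z/\varphi(z))'\circ\psi\bigr)(z)$, producing precisely the denominator of Hermite's expression. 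I expect this final algebraic identification to be the only mildly delicate step; everything upstream is routine bookkeeping around Theorem \ref{t-lif}.
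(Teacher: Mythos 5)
Your proposal is correct and follows essentially the same route as the paper: apply the first part of Theorem \ref{t-lif} with $f=\varphi$, $\alpha=z$, $z_0=0$ to an analytic antiderivative of $g/\varphi$, then differentiate and reindex. The only (cosmetic) difference is in the final identification of the left-hand side, where the paper invokes the inverse-function derivative formula $\psi'=1/\bigl((z/\varphi(z))'\circ\psi\bigr)$ directly while you recover the same identity by implicitly differentiating $\psi(z)=z\,\varphi(\psi(z))$; both computations check out.
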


Since this last result of Hermite is unfamiliar, we have preferred to include its proof in this paper.

\begin{proof}[Proof of Corollary \ref{t-her}]
Let f be a primitive of the function $\frac{g}{\varphi}$ at a neighborhood of $0$. By definition of $\psi$, we have $\left(\frac{z}{\varphi(z)} \circ \psi\right)(z) = z$; that is $\psi$ is implicitly given by:
\begin{equation}\label{eq10}
\psi(z) = 0 + z \cdot \left(\varphi \circ \psi\right)(z) .  
\end{equation}
So by applying the first part of Theorem \ref{t-lif} with \eqref{eq10} filling the role of \eqref{eq-lag}; that is with $w$ replaced by $\psi$, $f$ by $\varphi$, $\alpha$ by $z$, $z_0$ by $0$, and $g$ by $f$, we get
\begin{align*}
\left(f \circ \psi\right)(z) & = f(0) + \sum_{n = 1}^{+ \infty} \frac{z^n}{n!} \left[\left(\frac{d}{d z}\right)^{n - 1} \!\!\!\left(\varphi(z)^n f'(z)\right)\right](0) \\
& = f(0) + \sum_{n = 1}^{+ \infty} \frac{z^n}{n} \left[z^{n - 1}\right]\left(g(z) \varphi(z)^{n - 1}\right) ~~~~~~~~~~ (\text{since } f' = \frac{g}{\varphi}) \\
& = f(0) + \sum_{n = 0}^{+ \infty} \frac{z^{n + 1}}{n + 1} \left[z^n\right]\left(g(z) \varphi(z)^n\right) .
\end{align*}
By differentiating with respect to $z$, we get
$$
\psi'(z) \cdot \left(f' \circ \psi\right)(z) = \sum_{n = 0}^{+ \infty} z^n \left[z^n\right]\left(g(z) \varphi(z)^n\right) .
$$
But since
$$
\psi'(z) = \left(\left(\frac{z}{\varphi(z)}\right)^{<-1>}\right)' = \frac{1}{\left(\frac{z}{\varphi(z)}\right)' \circ \psi(z)}
$$
and
\begin{multline*}
\left(f' \circ \psi\right)(z) = \left(\frac{g}{\varphi} \circ \psi\right)(z) = \frac{(g \circ \psi)(z)}{(\varphi \circ \psi)(z)} = \frac{1}{\psi(z)} \cdot \frac{\psi(z)}{(\varphi \circ \psi)(z)} \cdot \left(g \circ \psi\right)(z) \\
= \frac{1}{\psi(z)} \cdot \underbrace{\left(\frac{z}{\varphi(z)} \circ \psi(z)\right)}_{= z} \cdot \left(g \circ \psi\right)(z) = \frac{z}{\psi(z)} \cdot \left(g \circ \psi\right)(z) ,
\end{multline*}
it follows that:
$$
\frac{z \cdot (g \circ \psi)(z)}{\psi(z) \cdot \left(\left(\frac{z}{\varphi(z)}\right)' \circ \psi(z)\right)} = \sum_{n = 0}^{+ \infty} z^n [z^n]\left(g(z) \varphi(z)^n\right) ,
$$
as required.
\end{proof}

\section{The results and the proofs}\label{sec3}
We begin with the following result which express the powers of $\rho_k$ ($k \geq 2$) as real series involving binomial coefficients.

\begin{thm}\label{t1}
For every positive integers $k$ and $n$, with $k \geq 2$, we have
$$
\rho_k^n = 2^n - n 2^{n - k - 1} \sum_{\ell = 0}^{+ \infty} \frac{1}{\ell + 1} \binom{k (\ell + 1) + \ell - n}{\ell} \frac{1}{2^{(k + 1) \ell}} .
$$
In particular, we have
$$
\rho_k = 2 - \frac{1}{2^k} \sum_{\ell = 0}^{+ \infty} \frac{1}{\ell + 1} \binom{k (\ell + 1) + \ell - 1}{\ell} \frac{1}{2^{(k + 1) \ell}} .
$$
\end{thm}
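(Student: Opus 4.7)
The strategy is to apply the general Lagrange inversion formula (first part of Theorem~\ref{t-lif}) to the implicit relation \eqref{eq9} satisfied by $\epsilon_k = 2-\rho_k$. Since $(2-\epsilon_k)^k = 2^k(1-\epsilon_k/2)^k$, equation \eqref{eq9} rewrites as
$$
\epsilon_k \;=\; \frac{1}{2^k}\cdot\frac{1}{(1-\epsilon_k/2)^k},
$$
which matches the form $w = z_0 + \alpha f(w)$ of \eqref{eq-lag} with $z_0=0$, $\alpha=2^{-k}$, $w=\epsilon_k$, and $f(z)=(1-z/2)^{-k}$. For $k\geq 2$, $\alpha\leq 1/4$ is small and $\epsilon_k\in(0,2^{1-k})$ lies well inside the disk of analyticity of $f$, which legitimizes the use of the formula at this particular value of $\alpha$.

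Next, I would feed in the polynomial $g(z)=(2-z)^n$, so that $g(\epsilon_k)=\rho_k^n$ and $g(0)=2^n$. A direct computation gives $g'(z)=-n\,2^{n-1}(1-z/2)^{n-1}$ and
$$
f(z)^m\,g'(z) \;=\; -n\,2^{n-1}\,(1-z/2)^{\,n-1-km}.
$$
Repeated differentiation of a power of $(1-z/2)$ and evaluation at $0$ yields a falling factorial, so that
$$
\left[\left(\tfrac{d}{dz}\right)^{m-1}(f^m g')\right](0) \;=\; -n\,2^{n-1}(-1/2)^{m-1}(m-1)!\binom{n-1-km}{m-1}
$$
by definition \eqref{eq1}. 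Substituting this into Theorem~\ref{t-lif}, collecting powers of $2$, using $(m-1)!/m!=1/m$, and reindexing with $\ell=m-1$ produces the intermediate identity
$$
\rho_k^n \;=\; 2^n - n\,2^{n-k-1}\sum_{\ell=0}^{+\infty}\frac{(-1)^\ell}{\ell+1}\binom{n-k(\ell+1)-1}{\ell}\frac{1}{2^{(k+1)\ell}}.
$$

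The final step is a purely combinatorial conversion: applying \eqref{eq3} with $a=n-k(\ell+1)-1$ and $b=\ell$ (the case $b\leq a<0$ never occurs, since $\ell\geq 0$, so only the ``else'' branch is used) gives
$$
(-1)^\ell\binom{n-k(\ell+1)-1}{\ell} \;=\; \binom{k(\ell+1)+\ell-n}{\ell},
$$
turning the sum into the form stated in the theorem. The particular case ``$\rho_k = \dots$'' then follows by setting $n=1$. The main obstacle I anticipate is purely bookkeeping: keeping signs and powers of $2$ straight during the reindexing, and checking that \eqref{eq3} applies uniformly in $\ell$. Once $f$, $g$, $\alpha$, and $z_0$ have been correctly identified, the Lagrange step itself is essentially mechanical.
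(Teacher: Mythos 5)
Your proposal is correct and follows essentially the same route as the paper: both apply the first part of Theorem~\ref{t-lif} to the implicit equation \eqref{eq9} with $g(z)=(2-z)^n$ and then convert the negative-upper-index binomial coefficients via \eqref{eq3}. The only cosmetic differences are that you normalize so that $\alpha=2^{-k}$ and $f(z)=(1-z/2)^{-k}$ (the paper takes $\alpha=1$, $f(z)=(2-z)^{-k}$) and that you evaluate the iterated derivative directly rather than as a coefficient extraction; the computations are otherwise identical.
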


\begin{proof}
Let $k$ and $n$ be two fixed positive integers with $k \geq 2$. By applying the first part of Theorem \ref{t-lif} with \eqref{eq9} filling the role of \eqref{eq-lag}; that is with $w = \epsilon_k$, $f(z) = (2 - z)^{- k}$, $\alpha = 1$, $z_0 = 0$, and $g(z) = (2 - z)^n$, we get
\begin{align*}
\left(2 - \epsilon_k\right)^n & = 2^n + \sum_{\ell = 1}^{+ \infty} \frac{1}{\ell!} \left[\left(\frac{d}{d z}\right)^{\ell - 1} \!\!\!\left((2 - z)^{- k \ell} (- n (2 - z)^{n - 1})\right)\right](0) \\
& = 2^n - n \sum_{\ell = 1}^{+ \infty} \frac{1}{\ell!} \left[\left(\frac{d}{d z}\right)^{\ell - 1}\!\!\!(2 - z)^{- k \ell + n - 1}\right](0) \\
& = 2^n - n \sum_{\ell = 1}^{+ \infty} \frac{1}{\ell} \left[z^{\ell - 1}\right](2 - z)^{- k \ell + n - 1} .
\end{align*}
But according to the generalized binomial formula, we have for all $\ell \in \N$:
$$
\left[z^{\ell - 1}\right](2 - z)^{- k \ell + n - 1} = \binom{- k \ell + n - 1}{\ell - 1} 2^{- k \ell + n - \ell} (-1)^{\ell - 1} = \binom{k \ell - n + \ell - 1}{\ell - 1} \frac{2^n}{2^{(k + 1) \ell}}
$$
(according to \eqref{eq3}). Thus
\begin{align*}
\left(2 - \epsilon_k\right)^n & = 2^n - n \sum_{\ell = 1}^{+ \infty} \frac{1}{\ell} \binom{k \ell - n + \ell - 1}{\ell - 1} \frac{2^n}{2^{(k + 1) \ell}} \\
& = 2^n - n \sum_{\ell = 0}^{+ \infty} \frac{1}{\ell + 1} \binom{k (\ell + 1) + \ell - n}{\ell} \frac{2^n}{2^{(k + 1)(\ell + 1)}} ;
\end{align*}
that is 
$$
\rho_k^n = 2^n - n 2^{n - k - 1} \sum_{\ell = 0}^{+ \infty} \frac{1}{\ell + 1} \binom{k (\ell + 1) + \ell - n}{\ell} \frac{1}{2^{(k + 1) \ell}} ,
$$
as required.
\end{proof}

\begin{rmks}~
\begin{enumerate}
\item In the proof of Theorem \ref{t1}, we have omitted the verification of the convergence of the series $\sum_{\ell = 0}^{+ \infty} \frac{1}{\ell + 1} \binom{k (\ell + 1) + \ell - n}{\ell} \frac{1}{2^{(k + 1) \ell}}$. Note that this convergence can be easily confirmed by using for example the elementary upper bounds $\binom{m}{\ell} \leq \frac{2^m}{\sqrt{m + 2}}$ (for $1 \leq \ell \leq m$).
\item Actually, the formula of $\rho_k$ in the second part of Theorem \ref{t1} has already been discovered by Wolfram \cite{wol}, who obtained it in a more complicated way. 
\end{enumerate}
\end{rmks}

By specializing in Theorem \ref{t1} the integer $k$ to $2$, we derive important formulas for the golden ration $\phi$ and its powers. Precisely, we obtain the following corollary:

\begin{coll}\label{coll1}
For every positive integer $n$, we have
$$
\phi^n = 2^n - n 2^{n - 3} \sum_{\ell = 0}^{+ \infty} \frac{1}{\ell + 1} \binom{3 \ell + 2 - n}{\ell} \frac{1}{2^{3 \ell}} .
$$
In particular, we have
\begin{equation}
\phi = 2 - \sum_{\ell = 0}^{+ \infty} \frac{1}{\ell + 1} \binom{3 \ell + 1}{\ell} \frac{1}{2^{3 \ell + 2}} . \tag*{$\square$}
\end{equation}
\end{coll}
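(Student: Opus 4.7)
The plan is to obtain Corollary \ref{coll1} by direct specialization of Theorem \ref{t1} to $k = 2$, using only that the golden ratio $\phi$ coincides with $\rho_2$. Indeed, for $k = 2$ the characteristic equation \eqref{prin-eq1} reads $x^2 = x + 1$, whose unique root of modulus greater than $1$ is $\phi = \frac{1 + \sqrt{5}}{2}$, so that $\rho_2 = \phi$ by the definition of $\rho_k$ recalled in the Introduction.

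With this identification, I would substitute $k = 2$ into the formula of Theorem \ref{t1}. The exponent $n - k - 1$ becomes $n - 3$, the exponent $(k+1)\ell$ becomes $3\ell$, and the upper argument of the binomial coefficient, $k(\ell + 1) + \ell - n$, simplifies to $3\ell + 2 - n$. This immediately yields the first identity of the corollary, namely
$$
\phi^n = 2^n - n \, 2^{n - 3} \sum_{\ell = 0}^{+ \infty} \frac{1}{\ell + 1} \binom{3 \ell + 2 - n}{\ell} \frac{1}{2^{3 \ell}} .
$$

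For the second identity, I would take $n = 1$ in the formula just obtained. The prefactor $n \, 2^{n-3}$ becomes $2^{-2} = 1/4$, the argument $3\ell + 2 - n$ becomes $3\ell + 1$, and combining $\frac{1}{4} \cdot \frac{1}{2^{3\ell}} = \frac{1}{2^{3\ell + 2}}$ gives
$$
\phi = 2 - \sum_{\ell = 0}^{+ \infty} \frac{1}{\ell + 1} \binom{3 \ell + 1}{\ell} \frac{1}{2^{3 \ell + 2}} ,
$$
which is exactly the stated formula.

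There is no real obstacle: the convergence of the series has already been handled (see the first remark following Theorem \ref{t1}), and the derivation is a purely mechanical substitution. The only minor point worth flagging is the identification $\rho_2 = \phi$, which however follows at once from the Miles--Wolfram localization results quoted in the Introduction together with the fact that $\phi$ is the unique root of $x^2 - x - 1$ lying outside the closed unit disk.
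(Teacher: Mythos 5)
Your proposal is correct and follows exactly the route the paper intends: the corollary is obtained by the mechanical specialization $k = 2$ (so $\rho_2 = \phi$) in Theorem \ref{t1}, with the second identity being the case $n = 1$ of the first. Nothing further is needed.
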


More interestingly, we are going to find representations as series (analogous to those of Theorem \ref{t1}) for the asymptotic equivalent $\as_n^{(k)}$ of $F_n^{(k)}$. To achieve this, we rely on the following key theorem:

\begin{thm}\label{t2}
For every integers $k$ and $a$, with $k \geq 2$, we have
$$
\sum_{\ell = 0}^{+ \infty} \binom{(k + 1) \ell + a}{\ell} 2^{- (k + 1) \ell} = 2^{a + 1} \cdot \frac{\rho_k^{- a}}{(k + 1) \rho_k - 2 k} .
$$
\end{thm}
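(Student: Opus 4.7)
The plan is to apply Hermite's Corollary \ref{t-her} with choices of $\varphi$ and $g$ engineered so that the right-hand side of Hermite's identity is exactly $\sum_{\ell \geq 0} \binom{(k+1)\ell + a}{\ell}\, z^\ell$. Since the generalized binomial formula gives $[z^n](1+z)^m = \binom{m}{n}$ for every integer $m$ and every $n \in \N_0$, the natural choice is
$$
\varphi(z) = (1+z)^{k+1}, \qquad g(z) = (1+z)^a,
$$
for then $g(z)\varphi(z)^n = (1+z)^{(k+1)n + a}$, hence $[z^n]\bigl(g(z)\varphi(z)^n\bigr) = \binom{(k+1)n + a}{n}$.

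With this choice, the function $\psi$ of Corollary \ref{t-her} is the compositional inverse of $z \mapsto z(1+z)^{-(k+1)}$, i.e.\ it satisfies the implicit equation $\psi(z) = z\bigl(1 + \psi(z)\bigr)^{k+1}$. A direct computation gives
$$
\left(\frac{z}{\varphi(z)}\right)'\!(z) = (1 - kz)(1+z)^{-(k+2)},
$$
and substituting this together with the implicit relation $z/\psi(z) = (1+\psi(z))^{-(k+1)}$ into the left-hand side of Hermite's formula collapses it, after cancellation, to
$$
\frac{(1+\psi(z))^{a+1}}{1 - k\,\psi(z)}.
$$
Thus Corollary \ref{t-her} yields the generating-function identity
$$
\frac{(1+\psi(z))^{a+1}}{1 - k\,\psi(z)} = \sum_{n = 0}^{+\infty} \binom{(k+1)n + a}{n} z^n .
$$

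It then remains to specialize $z = 2^{-(k+1)}$. Writing $w := \psi\!\bigl(2^{-(k+1)}\bigr)$ and $u := 1 + w$, the defining equation becomes $u^{k+1} = 2^{k+1}(u - 1)$; the substitution $u = 2/\rho_k$ reduces this exactly to \eqref{prin-eq2}, so $w = 2/\rho_k - 1$. Plugging $1 + w = 2/\rho_k$ and $1 - kw = \bigl((k+1)\rho_k - 2k\bigr)/\rho_k$ into the closed form gives the required right-hand side $2^{a+1}\rho_k^{-a}/\bigl((k+1)\rho_k - 2k\bigr)$. The main obstacle is that Hermite's identity is a priori a local (or formal power-series) statement, so to be entitled to substitute $z = 2^{-(k+1)}$ I must verify both the convergence of the series at that point and that the chosen value $w = 2/\rho_k - 1$ lies on the branch of $\psi$ analytic at $0$. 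The branch point of $\psi$ occurs where $\bigl(z/\varphi(z)\bigr)'$ vanishes, namely at $k\psi = 1$; both issues therefore reduce to the single inequality $kw < 1$, equivalently $\rho_k > 2k/(k+1)$. This follows at once from Wolfram's bound \eqref{eq6}, since $\rho_k > 2 - 2^{1-k} > 2 - 2/(k+1) = 2k/(k+1)$ for every $k \geq 2$ (using $2^k > k+1$).
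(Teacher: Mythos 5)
Your proposal is correct and follows essentially the same route as the paper: both apply Hermite's Corollary \ref{t-her} with $\varphi$ and $g$ chosen as powers of a linear function so that $[z^\ell](g\varphi^\ell)=\binom{(k+1)\ell+a}{\ell}$ (up to a power of $2$), and then evaluate at a point that the characteristic equation \eqref{prin-eq2} ties to $\rho_k$; your choice $\varphi(z)=(1+z)^{k+1}$, $g(z)=(1+z)^a$ evaluated at $z=2^{-(k+1)}$ is just the paper's choice $\varphi(z)=(2-z)^{-k}$, $g(z)=(2-z)^{-a-1}$ evaluated at $z=1$, rewritten in the coordinate $t=z/(2-z)$. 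Your closing discussion of convergence and of the branch of $\psi$ (via $k\psi<1$ and Wolfram's bound \eqref{eq6}) is a welcome addition that the paper relegates to a remark.
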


\begin{proof}
Let $k$ and $a$ be fixed integers, with $k \geq 2$. We apply Corollary \ref{t-her} for the functions $\varphi(z) = (2 - z)^{- k}$ and $g(z) = (2 - z)^{- a - 1}$, which are both analytic at a neighborhood of $0$ and we have $\varphi(0) = 2^{- k} \neq 0$. In this situation, the compositional inverse function $\psi$ of the function $z \mapsto \frac{z}{\varphi(z)} = z (2 - z)^k$ (at a neighborhood of $0$) satisfies (according to \eqref{eq9}): $\psi(1) = \epsilon_k$. So by specializing in formula of Corollary \ref{t-her} the variable $z$ to $1$, we get
$$
\frac{1 \cdot (g \circ \psi)(1)}{\psi(1) \cdot \left(\left(z (2 - z)^k\right)' \circ \psi\right)(1)} = \sum_{\ell = 0}^{+ \infty} 1^{\ell} [z^{\ell}](2 - z)^{- k \ell - a - 1} .
$$
But since $\psi(1) = \epsilon_k$ (as clarified above), $\left(z (2 - z)^k\right)' = (2 - z)^{k - 1} \left(2 - (k + 1) z\right)$, and for all $\ell \in \N_0$: $[z^{\ell}](2 - z)^{- k \ell - a - 1} = \binom{- k \ell - a - 1}{\ell} 2^{- k \ell - a - 1 - \ell} (-1)^{\ell} = \binom{(k + 1) \ell + a}{\ell} 2^{- (k + 1) \ell - a - 1}$ (by using the generalized binomial formula and then \eqref{eq3}), it follows that
$$
\frac{(2 - \epsilon_k)^{- a - 1}}{\epsilon_k (2 - \epsilon_k)^{k - 1} \left(2 - (k + 1) \epsilon_k\right)} = \sum_{\ell = 0}^{+ \infty} \binom{(k + 1) \ell + a}{\ell} 2^{- (k + 1) \ell - a - 1} .
$$
Finally, since $\epsilon_k (2 - \epsilon_k)^k = 1$ (according to \eqref{eq9}), $2 - \epsilon_k = \rho_k$, and $2 - (k + 1) \epsilon_k =$ \linebreak $2 - (k + 1) (2 - \rho_k) = (k + 1) \rho_k - 2 k$, we conclude to
$$
\frac{\rho_k^{- a}}{(k + 1) \rho_k - 2 k} = \sum_{\ell = 0}^{+ \infty} \binom{(k + 1) \ell + a}{\ell} 2^{- (k + 1) \ell - a - 1} ,
$$
which immediately gives the required formula of the theorem.
\end{proof}

\begin{rmk}
The convergence of the series in the formula of Theorem \ref{t2} can be verified by using for example the Stirling formula $n! \sim_{+ \infty} n^n e^{- n} \sqrt{2 \pi n}$.
\end{rmk}

Relying on Theorem \ref{t2}, we are now able to provide a series representation (involving binomial coefficients) for the asymptotic equivalent $\as_n^{(k)}$ of $F_n^{(k)}$. This is the subject of the following theorem:

\begin{thm}\label{t3}
For every nonnegative integers $k$ and $n$, with $k \geq 2$ and $n \neq 1$, we have
$$
\as_n^{(k)} = 2^{n - 2} \sum_{\ell = 0}^{+ \infty} \left[\binom{(k + 1) \ell - n}{\ell} - \binom{(k + 1) \ell - n}{\ell - 1}\right] 2^{- (k + 1) \ell} .
$$
\end{thm}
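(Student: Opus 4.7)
The plan is to expand the bracketed expression into two series, evaluate each via Theorem \ref{t2}, and then use the defining equation \eqref{prin-eq2} of $\rho_k$ to match the result against the definition \eqref{eq8} of $\as_n^{(k)}$.

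First I would handle the ``main'' sum: applying Theorem \ref{t2} with the integer parameter $a$ specialized to $-n$ gives at once
$$
\sum_{\ell = 0}^{+\infty} \binom{(k+1)\ell - n}{\ell}\, 2^{-(k+1)\ell} \;=\; 2^{\,1-n}\cdot \frac{\rho_k^{\,n}}{(k+1)\rho_k - 2k}.
$$
For the ``shifted'' sum $\sum_{\ell\geq 0}\binom{(k+1)\ell-n}{\ell-1}2^{-(k+1)\ell}$, I would first check that the $\ell=0$ term $\binom{-n}{-1}$ vanishes under our hypothesis. Going through the three cases in \eqref{eq1}: for $n=0$ it equals $\binom{0}{1}=0$, and for $n\geq 2$ we have $-n<-1<0$ so the third case gives $0$; only the excluded value $n=1$ produces $\binom{-1}{-1}=1$. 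This is precisely why the statement bars $n=1$. With the $\ell=0$ term gone, I would reindex $\ell=m+1$ to obtain
$$
\sum_{\ell = 0}^{+\infty} \binom{(k+1)\ell - n}{\ell-1}\, 2^{-(k+1)\ell}
\;=\; 2^{-(k+1)}\sum_{m=0}^{+\infty} \binom{(k+1)m + (k+1-n)}{m}\,2^{-(k+1)m},
$$
and then apply Theorem \ref{t2} with $a = k+1-n$ to evaluate this as $2^{\,1-n}\cdot\rho_k^{\,n-k-1}/\bigl((k+1)\rho_k - 2k\bigr)$.

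Next I would subtract these two evaluations and multiply by $2^{n-2}$, which yields
$$
\text{RHS} \;=\; \frac{1}{2\bigl((k+1)\rho_k - 2k\bigr)}\,\rho_k^{\,n-k-1}\bigl(\rho_k^{\,k+1}-1\bigr).
$$
The final step is algebraic: to identify this with $\as_n^{(k)} = (\rho_k-1)\rho_k^{n-1}/\bigl((k+1)\rho_k-2k\bigr)$, it suffices to show $\rho_k^{k+1}-1 = 2(\rho_k-1)\rho_k^{k}$, i.e.\ $\rho_k^{k+1}-2\rho_k^{k}+1=0$, which is exactly equation \eqref{prin-eq2} satisfied by $\rho_k$.

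I do not expect any real obstacle beyond bookkeeping; the only subtle point is the vanishing of $\binom{-n}{-1}$, which both justifies the reindexing and explains the exclusion $n\neq 1$. Convergence of both series is of the same nature as in Theorem \ref{t2} and can be invoked without further comment.
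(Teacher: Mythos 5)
Your proposal is correct and follows essentially the same route as the paper: two applications of Theorem \ref{t2} with $a=-n$ and $a=k+1-n$, the observation that $\binom{-n}{-1}=0$ precisely when $n\neq 1$ to justify the reindexing, and a final algebraic identification (the paper invokes \eqref{eq9} in the form $\rho_k^{-k}=2-\rho_k$ where you invoke \eqref{prin-eq2} directly, which is the same fact). No gaps.
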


\begin{proof}
Let $k$ and $n$ be two fixed nonnegative integers such that $k \geq 2$ and $n \neq 1$. By applying Theorem \ref{t2} for $a = - n$, we get
\begin{equation}\label{eq11}
\sum_{\ell = 0}^{+ \infty} \binom{(k + 1) \ell - n}{\ell} 2^{- (k + 1) \ell} = 2^{- n + 1} \cdot \frac{\rho_k^n}{(k + 1) \rho_k - 2 k} .
\end{equation}
Similarly, by applying Theorem \ref{t2} for $a = k + 1 - n$, we get
\begin{equation}\label{eq12}
\sum_{\ell = 0}^{+ \infty} \binom{(k + 1) \ell + k + 1 - n}{\ell} 2^{- (k + 1) \ell} = 2^{k + 2 - n} \cdot \frac{\rho_k^{n - k - 1}}{(k + 1) \rho_k - 2 k} .
\end{equation}
But we remark that
\begin{align*}
\sum_{\ell = 0}^{+ \infty} \binom{(k + 1) \ell + k + 1 - n}{\ell} 2^{- (k + 1) \ell} & = \sum_{\ell = 0}^{+ \infty} \binom{(k + 1)(\ell + 1) - n}{\ell} 2^{- (k + 1) \ell} \\
& = \sum_{\ell = 1}^{+ \infty} \binom{(k + 1) \ell - n}{\ell - 1} 2^{- (k + 1)(\ell - 1)} \\
& = 2^{k + 1} \sum_{\ell = 0}^{+ \infty} \binom{(k + 1) \ell - n}{\ell - 1} 2^{- (k + 1) \ell}
\end{align*}
(since $\binom{(k + 1) \ell - n}{\ell - 1} = 0$ for $\ell = 0$, because $n \neq 1$). Thus Formula \eqref{eq12} is simplified to
\begin{equation}\label{eq13}
\sum_{\ell = 0}^{+ \infty} \binom{(k + 1) \ell - n}{\ell - 1} 2^{- (k + 1) \ell} = 2^{- n + 1} \frac{\rho_k^{n - k - 1}}{(k + 1) \rho_k - 2 k} .
\end{equation}
Next, by subtracting side by side Eq.~\eqref{eq13} from Eq.~\eqref{eq11} and then multiplying by $2^{n - 2}$, we get
\begin{align*}
2^{n - 2} \sum_{\ell = 0}^{+ \infty} \left[\binom{(k + 1) \ell - n}{\ell} - \binom{(k + 1) \ell - n}{\ell - 1}\right] 2^{- (k + 1) \ell} & = \frac{1}{2} \cdot \frac{\rho_k^n - \rho_k^{n - k - 1}}{(k + 1) \rho_k - 2 k} \\
& \hspace*{-3cm} = \frac{1}{2} \cdot \frac{\rho_k - \rho_k^{- k}}{(k + 1) \rho_k - 2 k} \cdot \rho_k^{n - 1} \\
& \hspace*{-3cm} = \frac{1}{2} \cdot \frac{\rho_k - (2 - \rho_k)}{(k + 1) \rho_k - 2 k} \cdot \rho_k^{n - 1} ~~~~~~ (\text{in view of \eqref{eq9}}) \\
& \hspace*{-3cm} = \frac{\rho_k - 1}{(k + 1) \rho_k - 2 k} \rho_k^{n - 1} \\
& \hspace*{-3cm} = \as_n^{(k)} ~~~~~~~~~~~~~~~~~~~~~~~~~~~~~~ (\text{according to \eqref{eq8}}) ,
\end{align*}
as required.
\end{proof}

Observing Formula of Theorem \ref{t3}, and because $As_n^{(k)}$ is close to $F_n^{(k)}$ for $n$ large, a natural question arises: can replacing the series therein with its truncation at some positive integer $N(n , k)$ (depending on $n$ and $k$) yield the $k$-Fibonacci numbers? Working in this direction, we have established the following curious result:

\begin{thm}\label{t4}
For every integers $n , k \geq 2$, we have
$$
F_{n + k - 2}^{(k)} = 2^{n - 2} \sum_{0 \leq \ell \leq \frac{n - 1}{k + 1}} \left[\binom{(k + 1) \ell - n}{\ell} - \binom{(k + 1) \ell - n}{\ell - 1}\right] 2^{- (k + 1) \ell} . 
$$
\end{thm}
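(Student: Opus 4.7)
The plan is to bypass Theorem \ref{t3} and prove Theorem \ref{t4} directly from the rational generating function of the $k$-Fibonacci sequence. Using the recurrence \eqref{eq4}, the initial values, and the factorization $(1-z)(1-z-z^2-\cdots-z^k)=1-2z+z^{k+1}$, one first obtains
$$\sum_{n\ge 0} F_n^{(k)} z^n = \frac{z^{k-1}(1-z)}{1-2z+z^{k+1}},$$
and since $F_0^{(k)}=\cdots=F_{k-3}^{(k)}=0$, the shifted sequence has the simpler generating function
$$H(z):=\sum_{n\ge 0} F_{n+k-2}^{(k)} z^n = \frac{z-z^2}{1-2z+z^{k+1}}.$$

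Next, I would extract $[z^n]H(z)$ by writing
$$\frac{1}{1-2z+z^{k+1}} = \sum_{m=0}^{\infty} z^m(2-z^k)^m$$
and applying the binomial theorem to $(2-z^k)^m$. Setting $m=N-kj$ in the resulting double sum collapses it to the finite form
$$[z^N]\frac{1}{1-2z+z^{k+1}} = \sum_{0\le j \le N/(k+1)}(-1)^j\binom{N-kj}{j}\,2^{N-(k+1)j}.$$
Applying this with $N=n-1$ and $N=n-2$ and subtracting (a boundary $j$ with $(k+1)j=n-1$, if present, contributes harmlessly since $\binom{n-2-kj}{j}=\binom{j-1}{j}=0$ for $j\ge 1$), I reach
$$F_{n+k-2}^{(k)} = \sum_{(k+1)j\le n-1}(-1)^j\Bigl[2\binom{n-1-kj}{j}-\binom{n-2-kj}{j}\Bigr]\,2^{n-2-(k+1)j}.$$
One application of Pascal's rule \eqref{eq2} rewrites the bracket as $\binom{n-1-kj}{j}+\binom{n-2-kj}{j-1}$.

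The final step is to convert the RHS of Theorem \ref{t4} into the same shape. Inside the truncated range one has $(k+1)\ell-n\le -1<0$, so \eqref{eq3} turns each negative-top binomial into a positive-top one and a short computation gives
$$\binom{(k+1)\ell-n}{\ell}-\binom{(k+1)\ell-n}{\ell-1}=(-1)^\ell\Bigl[\binom{n-k\ell-1}{\ell}+\binom{n-k\ell-2}{\ell-1}\Bigr];$$
for $\ell=0$ one must separately check that $\binom{-n}{-1}=0$, which holds via the third clause of \eqref{eq1} because $n\ge 2$. Renaming $j=\ell$ yields the claimed identity. The main obstacle is not conceptual but bookkeeping: isolating the boundary case $(k+1)\ell=n-1$, handling the $\ell=0$ term, and navigating the two-case structure of \eqref{eq3}; once these are sorted out, Pascal's rule delivers the match.
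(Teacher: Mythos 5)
Your argument is correct, and it takes a genuinely different route from the paper. The paper proceeds by induction in disguise: it defines $G_n^{(k)}$ as the right-hand side, shows by direct manipulation of the truncated sum (via Pascal's rule \eqref{eq2}) that $G_{n+k+1}^{(k)} = 2G_{n+k}^{(k)} - G_n^{(k)}$, computes the $k+1$ initial values $G_2^{(k)},\dots,G_{k+2}^{(k)}$ explicitly, and matches them against $F_k^{(k)},\dots,F_{2k}^{(k)}$ using \eqref{eq17} and the recurrence \eqref{eq5}. You instead extract the coefficient $[z^n]$ of the generating function $\frac{z-z^2}{1-2z+z^{k+1}}$ via the expansion $\sum_m z^m(2-z^k)^m$, which produces the finite sum directly (your intermediate form $\sum_j(-1)^j[\binom{n-1-kj}{j}+\binom{n-2-kj}{j-1}]2^{n-2-(k+1)j}$ is, after the index shift, exactly the variant recorded in the paper's final remark), and then you convert to the stated negative-top form via \eqref{eq3}. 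I checked the delicate points: the boundary term at $(k+1)j=n-1$ does vanish because $\binom{j-1}{j}=0$ for $j\ge 1$ and $j=0$ is excluded by $n\ge 2$; the $\ell=0$ case of the sign-flip identity works because $\binom{-n}{-1}=0$ for $n\ge 2$; and all $\ell$ in the truncated range satisfy $(k+1)\ell-n<\ell$, so only the ``else'' branch of \eqref{eq3} is ever invoked. Your approach buys a self-contained, fully elementary proof that never touches the Lagrange inversion machinery or Theorems \ref{t2}--\ref{t3}, and it explains \emph{why} the truncation point $\frac{n-1}{k+1}$ is the right one (it is forced by the support of $z^m(2-z^k)^m$); the cost is that it is disconnected from the paper's theme and does not illuminate the relation between the finite sum and the infinite series of Theorem \ref{t3}, which the paper's recurrence-plus-initial-values argument leaves visible.
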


To prepare for the proof of Theorem \ref{t4}, let us provisionally define (for every integers $n , k \geq 2$)
$$
G_n^{(k)} := 2^{n - 2} \sum_{0 \leq \ell \leq \frac{n - 1}{k + 1}} \left[\binom{(k + 1) \ell - n}{\ell} - \binom{(k + 1) \ell - n}{\ell - 1}\right] 2^{- (k + 1) \ell} .  
$$
We first establish the two following propositions:

\begin{prop}\label{p2}
Let $n , k \geq 2$ be two integers. Then we have
\begin{equation}\label{eq18}
\begin{split}
G_n^{(k)} & = 2^{n - 2} ~~~~~~~~~~ \text{for } 2 \leq n \leq k + 1 , \\
G_{k + 2}^{(k)} & = 2^k - 1 .
\end{split}
\end{equation}
\end{prop}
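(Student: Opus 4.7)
The plan is a direct computation from the definition of $G_n^{(k)}$, using the generalized binomial coefficient conventions in \eqref{eq1}. The key observation is that the upper bound $\frac{n-1}{k+1}$ on the summation index $\ell$ is very restrictive in both cases, so only a handful of terms survive.

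For the first identity ($2 \leq n \leq k+1$), I would first note that the hypothesis gives $\frac{n-1}{k+1} \leq \frac{k}{k+1} < 1$, which forces $\ell = 0$ to be the unique index appearing in the sum defining $G_n^{(k)}$. It then remains to evaluate the two binomial coefficients at $\ell = 0$. From the first case of \eqref{eq1} (empty product), $\binom{-n}{0} = 1$. For $\binom{-n}{-1}$, I would invoke the third case of \eqref{eq1}: since $n \geq 2$ we have $-n < -1 < 0$, so this coefficient vanishes. The bracket therefore equals $1$, and $G_n^{(k)} = 2^{n-2}$.

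For the second identity ($n = k+2$), the bound becomes $\frac{n-1}{k+1} = 1$, so only $\ell = 0$ and $\ell = 1$ contribute. The $\ell = 0$ contribution is the same as before, namely $1$. For $\ell = 1$, one has $(k+1)\ell - n = -1$; the first case of \eqref{eq1} gives $\binom{-1}{1} = -1$ and $\binom{-1}{0} = 1$, so the bracket equals $-2$ and the term contributes $-2 \cdot 2^{-(k+1)} = -2^{-k}$. Putting this together,
\[
G_{k+2}^{(k)} = 2^{k}\bigl(1 - 2^{-k}\bigr) = 2^k - 1.
\]

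There is no genuine obstacle here; the proposition is a verification rather than a theorem. The only point requiring care is the evaluation $\binom{-n}{-1} = 0$, which uses precisely the third (less standard) clause of definition \eqref{eq1} and is valid exactly because of the hypothesis $n \geq 2$. Once the conventions are correctly applied, both formulas in \eqref{eq18} fall out immediately from the restricted range of $\ell$.
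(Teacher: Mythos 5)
Your proposal is correct and follows essentially the same route as the paper: restrict the summation range to $\ell = 0$ (resp.\ $\ell \in \{0,1\}$), then evaluate the generalized binomial coefficients directly from \eqref{eq1}, with $\binom{-n}{-1} = 0$ handled by the third clause exactly as the paper does. Nothing further is needed.
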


\begin{proof}
For $2 \leq n \leq k + 1$, the only integer $\ell$ in the range $[0 , \frac{n - 1}{k + 1}]$ is $\ell = 0$. So the definition of $G_n^{(k)}$ gives
$$
G_n^{(k)} = 2^{n - 2} \left[\binom{- n}{0} - \binom{- n}{- 1}\right] = 2^{n - 2}
$$
(since $\binom{- n}{0} = 1$ and $\binom{- n}{- 1} = 0$), as required.

Next, for $n = k + 2$, the only integers in the range $[0 , \frac{n - 1}{k + 1}] = [0 , 1]$ are $0$ and $1$. So the definition of $G_{k + 2}^{(k)}$ gives
\begin{align*}
G_{k + 2}^{(k)} & = 2^k \left\{\left[\underbrace{\binom{- k - 2}{0}}_{= 1} - \underbrace{\binom{- k - 2}{- 1}}_{= 0}\right] + \left[\underbrace{\binom{-1}{1}}_{= -1} - \underbrace{\binom{-1}{0}}_{= 1}\right] 2^{- k - 1}\right\} \\
& = 2^k \left(1 - 2^{- k}\right) \\
& = 2^k - 1 ,
\end{align*}
as required. This completes the proof of the proposition.
\end{proof}

\begin{prop}\label{p1}
For all integers $n , k \geq 2$, we have
$$
G_{n + k + 1}^{(k)} = 2 G_{n + k}^{(k)} - G_n^{(k)} .
$$
\end{prop}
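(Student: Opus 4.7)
My plan is to prove Proposition \ref{p1} by introducing the coefficient
$$c_\ell(n) := \binom{(k+1)\ell - n}{\ell} - \binom{(k+1)\ell - n}{\ell-1},$$
so that $G_n^{(k)} = 2^{n-2} \sum_{0 \leq \ell \leq L_n} c_\ell(n) 2^{-(k+1)\ell}$ with $L_n := \lfloor (n-1)/(k+1) \rfloor$, and then attacking the recurrence coefficient by coefficient rather than invoking the $\mathrm{As}$-series directly (since the truncation prevents a term-by-term comparison with Theorem \ref{t3}).

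The crucial step is to establish the pointwise identity
$$c_\ell(n+k) = c_\ell(n+k+1) + c_{\ell-1}(n) \qquad (\ell \geq 0),$$
with the convention $c_{-1}(n) = 0$. This follows from two applications of Pascal's rule \eqref{eq2} to the two binomial coefficients appearing in $c_\ell(n+k)$: after expanding $\binom{(k+1)\ell - n - k}{\ell}$ and $\binom{(k+1)\ell - n - k}{\ell-1}$, the middle pieces $\binom{(k+1)\ell - n - k - 1}{\ell-1}$ cancel and one is left with $c_\ell(n+k+1) + c_{\ell-1}(n)$, using the reindexing $(k+1)\ell - n - k - 1 = (k+1)(\ell-1) - n$.

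With this identity in hand, I would multiply $G_{n+k}^{(k)}$ by $2$, substitute, and shift $m = \ell - 1$ in the $c_{\ell-1}(n)$ part to obtain
$$2 G_{n+k}^{(k)} = 2^{n+k-1} \sum_{\ell=0}^{L_{n+k}} c_\ell(n+k+1) 2^{-(k+1)\ell} + 2^{n-2} \sum_{m=0}^{L_{n+k}-1} c_m(n) 2^{-(k+1)m}.$$
Subtracting this from $G_{n+k+1}^{(k)} + G_n^{(k)}$ (which have the same summands but truncation limits $L_{n+k+1}$ and $L_n$ respectively), the recurrence reduces to checking that
$$2^{n+k-1}\!\!\sum_{L_{n+k} < \ell \leq L_{n+k+1}}\!\! c_\ell(n+k+1) 2^{-(k+1)\ell} + 2^{n-2}\!\!\sum_{L_{n+k}-1 < \ell \leq L_n}\!\! c_\ell(n) 2^{-(k+1)\ell} = 0.$$

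The main (and essentially only) obstacle is the bookkeeping of these boundary terms. Writing $n - 1 = q(k+1) + r$ with $0 \leq r \leq k$, one checks that $L_n = q$, $L_{n+k+1} = q + 1$, whereas $L_{n+k} = q$ if $r = 0$ and $L_{n+k} = q + 1$ otherwise. In the generic case $r \geq 1$, both index ranges above are empty and the identity is immediate. In the exceptional case $r = 0$ (which forces $q \geq 1$ since $n \geq 2$), only $\ell = q + 1$ survives in the first sum and only $\ell = q$ in the second; since $(k+1)q - n = -1$, the surviving coefficients involve $\binom{-1}{j} = (-1)^j$ and evaluate to $c_{q+1}(n+k+1) = -2(-1)^q$ and $c_q(n) = 2(-1)^q$. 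Together with the factors $2^{n+k-1-(k+1)(q+1)} = 2^{-1}$ and $2^{n-2-(k+1)q} = 2^{-1}$ coming from $n - 1 = q(k+1)$, the two boundary contributions simplify to $(-1)^{q+1}$ and $(-1)^q$, which cancel exactly, completing the proof.
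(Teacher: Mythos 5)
Your proof is correct and rests on the same two ingredients as the paper's own argument: Pascal's rule \eqref{eq2} applied to the summands (your identity $c_\ell(n+k)=c_\ell(n+k+1)+c_{\ell-1}(n)$ is exactly the pair of binomial identities the paper uses, read in the opposite direction), together with a careful reconciliation of the truncation limits. The only organizational difference is that the paper first pads all three sums to the common range $0\le\ell\le\frac{n+k}{k+1}$ by adjoining terms that vanish individually, whereas you apply Pascal first and then check that the two leftover boundary terms (present only when $k+1$ divides $n-1$) cancel each other; both versions of the bookkeeping are sound.
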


\begin{proof}
Let $n , k \geq 2$ be two fixed integers. We have by definition
\begin{equation}\label{eq14}
2 G_{n + k}^{(k)} = 2^{n + k - 1} \sum_{0 \leq \ell \leq \frac{n + k - 1}{k + 1}} \left[\binom{(k + 1) \ell - n - k}{\ell} - \binom{(k + 1) \ell - n - k}{\ell - 1}\right] 2^{- (k + 1) \ell} .
\end{equation}
But summing on $\ell$ such that $0 \leq \ell \leq \frac{n + k - 1}{k + 1}$ is equivalent to sum on $\ell$ such that $0 \leq \ell \leq \frac{n + k}{k + 1}$, except perhaps in the case where $\frac{n + k}{k + 1}$ is an integer. Suppose that we are in the last case (i.e., $\frac{n + k}{k + 1} \in \Z$). Then since $\frac{n + k}{k + 1} \geq \frac{k + 2}{k + 1} > 1$, we have $\frac{n + k}{k + 1} \geq 2$. Thus for $\ell = \frac{n + k}{k + 1} \geq 2$, we find that $\binom{(k + 1) \ell - n - k}{\ell} - \binom{(k + 1) \ell - n - k}{\ell - 1} = \binom{0}{\ell} - \binom{0}{\ell - 1} = 0$, implying that the sum for $0 \leq \ell \leq \frac{n + k - 1}{k + 1}$ in \eqref{eq14} can be replaced by the sum for $0 \leq \ell \leq \frac{n + k}{k + 1}$ even in the case where $\frac{n + k}{k + 1} \in \Z$. So we have in all cases
\begin{equation}\label{eq15}
2 G_{n + k}^{(k)} = 2^{n + k - 1} \sum_{0 \leq \ell \leq \frac{n + k}{k + 1}} \left[\binom{(k + 1) \ell - n - k}{\ell} - \binom{(k + 1) \ell - n - k}{\ell - 1}\right] 2^{- (k + 1) \ell} .
\end{equation}
Next, we have by definition
\begin{align*}
G_n^{(k)} & = 2^{n - 2} \sum_{0 \leq \ell \leq \frac{n - 1}{k + 1}} \left[\binom{(k + 1) \ell - n}{\ell} - \binom{(k + 1) \ell - n}{\ell - 1}\right] 2^{- (k + 1) \ell} \\
& = 2^{n - 2} \sum_{1 \leq \ell \leq \frac{n + k}{k + 1}} \left[\binom{(k + 1)(\ell - 1) - n}{\ell - 1} - \binom{(k + 1)(\ell - 1) - n}{\ell - 2}\right] 2^{- (k + 1)(\ell - 1)} \\
& = 2^{n + k - 1} \sum_{1 \leq \ell \leq \frac{n + k}{k + 1}} \left[\binom{(k + 1) \ell - n - k - 1}{\ell - 1} - \binom{(k + 1) \ell - n - k - 1}{\ell - 2}\right] 2^{- (k + 1) \ell} .
\end{align*}
But since $\ell = 0$ gives $\binom{(k + 1) \ell - n - k - 1}{\ell - 1} - \binom{(k + 1) \ell - n - k - 1}{\ell - 2} = \binom{- n - k - 1}{- 1} - \binom{- n - k - 1}{- 2} = 0$ (because $- n - k - 1 \leq - 5$), we have even
\begin{equation}\label{eq16}
G_n^{(k)} = 2^{n + k - 1} \sum_{0 \leq \ell \leq \frac{n + k}{k + 1}} \left[\binom{(k + 1) \ell - n - k - 1}{\ell - 1} - \binom{(k + 1) \ell - n - k - 1}{\ell - 2}\right] 2^{- (k + 1) \ell} . 
\end{equation}
By subtracting (side to side) \eqref{eq16} from \eqref{eq15} and using the formulas  
\begin{align*}
\binom{(k + 1) \ell - n - k}{\ell} - \binom{(k + 1) \ell - n - k - 1}{\ell - 1} = \binom{(k + 1) \ell - n - k - 1}{\ell} \\[-5mm]
\intertext{and} \\[-10mm]
\binom{(k + 1) \ell - n - k}{\ell - 1} - \binom{(k + 1) \ell - n - k - 1}{\ell - 2} = \binom{(k + 1) \ell - n - k - 1}{\ell - 1}
\end{align*}
that result from \eqref{eq2}, we get
\begin{align*}
2 G_{n + k}^{(k)} - G_n^{(k)} & = 2^{n + k - 1} \sum_{0 \leq \ell \leq \frac{n + k}{k + 1}} \left[\binom{(k + 1) \ell - n - k - 1}{\ell} - \binom{(k + 1) \ell - n - k - 1}{\ell - 1}\right] 2^{- (k + 1) \ell} \\
& = G_{n + k + 1}^{(k)} ,
\end{align*}
as required.
\end{proof}

We are now ready to prove Theorem \ref{t4}.

\begin{proof}[Proof of Theorem \ref{t4}]
Let $k \geq 2$ be a fixed integer. We have to show that $F_{n + k - 2}^{(k)} = G_n^{(k)}$ for all integer $n \geq 2$. Since the two sequences ${(F_{n + k - 2}^{(k)})}_{n \geq 2}$ and ${(G_n^{(k)})}_{n \geq 2}$ satisfy the same linear recurrence relation of order $(k + 1)$, which is
$$
u_{n + k + 1} = 2 u_{n + k} - u_n
$$
(according to \eqref{eq5} and to Proposition \ref{p1}) then it suffices to show that $F_{n + k - 2}^{(k)}$ and $G_n^{(k)}$ coincide at the first $(k + 1)$ consecutive values of $n$; that is
\begin{equation}\label{eq19}
F_{n + k - 2}^{(k)} = G_n^{(k)} ~~~~~~~~~~ \text{for } 2 \leq n \leq k + 2 .
\end{equation} 
To confirm \eqref{eq19}, we distinguish the two following cases:\\
\textbullet{} \textit{1\up{st} case}: (if $2 \leq n \leq k + 1$). In this case, we have $k \leq n + k - 2 \leq 2 k - 1$, implying (according to \eqref{eq17}) that $F_{n + k - 2}^{(k)} = 2^{n - 2}$. Since we have also (according to \eqref{eq18}) $G_n^{(k)} = 2^{n - 2}$ then $F_{n + k - 2}^{(k)} = G_n^{(k)}$ for this case. \\
\textbullet{} \textit{2\up{nd} case}: (If $n = k + 2$). In this case, we have $F_{n + k - 2}^{(k)} = F_{2 k}^{(k)} = 2^k - 1$ (according to \eqref{eq17}) and $G_n^{(k)} = G_{k + 2}^{(k)} = 2^k - 1$ (according to \eqref{eq18}). Hence $F_{n + k - 2}^{(k)} = G_n^{(k)}$ also for this case. This confirms \eqref{eq19} and completes this proof.
\end{proof}

By shifting the index $n$ to $(n - k + 2)$ in Theorem \ref{t4}, we derive the following corollary:

\begin{coll}\label{coll2}
For every integers $n$ and $k$ such that $n \geq k \geq 2$, we have
\begin{equation}
F_n^{(k)} = 2^{n - k} \sum_{0 \leq \ell \leq \frac{n - k + 1}{k + 1}} \left[\binom{(k + 1) \ell - n + k - 2}{\ell} - \binom{(k + 1) \ell - n + k - 2}{\ell - 1}\right] 2^{- (k + 1) \ell} . \tag*{$\square$}
\end{equation}
\end{coll}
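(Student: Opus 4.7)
The plan is to obtain Corollary \ref{coll2} by a direct index shift in Theorem \ref{t4}, replacing the dummy variable $n$ in the statement of Theorem \ref{t4} by $n - k + 2$. Since Theorem \ref{t4} requires $n \geq 2$, this substitution imposes $n - k + 2 \geq 2$, i.e., $n \geq k$, which together with $k \geq 2$ is exactly the hypothesis of the corollary.

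Under the substitution $n \mapsto n - k + 2$, I would then check that each ingredient transforms as claimed: the index on the left-hand side becomes $(n - k + 2) + k - 2 = n$, yielding $F_n^{(k)}$; the prefactor $2^{(n-k+2) - 2} = 2^{n-k}$ matches; the summation bound $\frac{(n-k+2) - 1}{k+1} = \frac{n - k + 1}{k+1}$ matches; and the top entries of the binomial coefficients become $(k+1)\ell - (n - k + 2) = (k+1)\ell - n + k - 2$, as required. These are entirely mechanical replacements with no arithmetic surprises.

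There is no genuine obstacle here; the only thing worth spelling out carefully is the translation of the hypothesis ($n \geq 2$ in the theorem $\leftrightarrow$ $n \geq k$ in the corollary), so that the reader sees why the shifted identity holds for precisely the stated range. For that reason I would keep the proof to a single short sentence invoking Theorem \ref{t4} with the indicated substitution, and close with $\square$ exactly as the excerpt already does in the displayed equation.
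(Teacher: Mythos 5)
Your proposal is correct and is exactly the paper's own derivation: the paper obtains Corollary \ref{coll2} by the same substitution $n \mapsto n - k + 2$ in Theorem \ref{t4}, leaving the mechanical verifications implicit. Your explicit check that the hypothesis $n \geq 2$ translates to $n \geq k$ is a worthwhile detail the paper omits.
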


By taking $k = 2$ in Corollary \ref{coll2}, we derive in particular an expression of the usual Fibonacci numbers as finite sums involving binomial coefficients and nonpositive powers of $8$. This is given by the following corollary:

\begin{coll}\label{coll3}
For all integer $n \geq 2$, we have
\begin{equation}
F_n = 2^{n - 2} \sum_{0 \leq \ell \leq \frac{n - 1}{3}} \left[\binom{3 \ell - n}{\ell} - \binom{3 \ell - n}{\ell - 1}\right] 2^{- 3 \ell} . \tag*{$\square$}
\end{equation}
\end{coll}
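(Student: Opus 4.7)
The plan is to observe that Corollary \ref{coll3} is the specialization of Corollary \ref{coll2} at $k=2$, so no new work is required beyond a careful substitution. I would begin by recording that the $2$-Fibonacci sequence defined by the recurrence \eqref{eq4} with initial values $F_0^{(2)}=0$, $F_1^{(2)}=1$ coincides termwise with the usual Fibonacci sequence, so $F_n^{(2)}=F_n$ for every $n \in \N_0$. The hypothesis $n \geq 2$ in Corollary \ref{coll3} matches the hypothesis $n \geq k \geq 2$ of Corollary \ref{coll2} exactly when $k=2$, so there is no range issue to handle separately.

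Next I would substitute $k=2$ into the formula of Corollary \ref{coll2}: the external factor $2^{n-k}$ becomes $2^{n-2}$, the upper summation bound $\frac{n-k+1}{k+1}$ becomes $\frac{n-1}{3}$, the weight $2^{-(k+1)\ell}$ becomes $2^{-3\ell}$, and the binomial arguments $(k+1)\ell - n + k - 2$ simplify to $3\ell - n$. Performing these substitutions produces exactly the identity in the statement. There is no real obstacle here; the only thing to watch is the clerical bookkeeping of indices, and the result then follows as a direct corollary (as indicated by the $\square$ placeholder at the end of the statement).
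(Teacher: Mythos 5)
Your proposal is correct and is exactly the paper's route: Corollary \ref{coll3} is obtained by setting $k=2$ in Corollary \ref{coll2}, with the arguments $(k+1)\ell-n+k-2$ collapsing to $3\ell-n$ and the hypothesis $n\geq k\geq 2$ becoming $n\geq 2$, just as you describe. The paper gives no further proof (hence the $\square$), so nothing is missing.
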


We end this paper by deriving from Corollary \ref{coll2} a result which corrects that of Howard and Cooper \cite[Theorem 2.4]{hc}.

\begin{coll}\label{coll4}
For every integers $n$ and $k$ such that $n \geq k \geq 2$ and $n \neq 2 k - 1$, we have
$$
F_n^{(k)} = 2^{n - k} + \sum_{1 \leq \ell \leq \frac{n - k + 1}{k + 1}} (-1)^{\ell} \left[\binom{n - (\ell + 1) k + 2}{\ell} - \binom{n - (\ell + 1) k}{\ell - 2}\right] 2^{n - (k + 1) \ell - k} .
$$
\end{coll}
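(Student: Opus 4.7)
The plan is to transform the finite-sum formula of Corollary~\ref{coll2} into the shape claimed in Corollary~\ref{coll4} by flipping signs via identity \eqref{eq3} and then invoking Pascal's rule \eqref{eq2}. First I would isolate the $\ell=0$ contribution: since $n\geq k$ gives $k-n-2\leq -2$, definition \eqref{eq1} yields $\binom{k-n-2}{0}=1$ and $\binom{k-n-2}{-1}=0$, so the $\ell=0$ term contributes exactly $2^{n-k}$, which is the leading term of Corollary~\ref{coll4}.

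For $\ell\geq 1$ in the sum range, the bound $\ell\leq(n-k+1)/(k+1)$ forces the upper index $a:=(k+1)\ell-n+k-2$ to satisfy $a\leq -1$, while the lower indices $\ell$ and $\ell-1$ are nonnegative and hence strictly greater than $a$. The ``else'' branch of \eqref{eq3} therefore applies to both binomial coefficients, giving after a short computation
$$
\binom{(k+1)\ell-n+k-2}{\ell}=(-1)^{\ell}\binom{n-(\ell+1)k+1}{\ell},\qquad
\binom{(k+1)\ell-n+k-2}{\ell-1}=(-1)^{\ell-1}\binom{n-(\ell+1)k}{\ell-1}.
$$
Writing $M:=n-(\ell+1)k$, the bracketed difference in Corollary~\ref{coll2} becomes $(-1)^{\ell}\bigl[\binom{M+1}{\ell}+\binom{M}{\ell-1}\bigr]$. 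Two consecutive applications of Pascal's identity \eqref{eq2}, namely $\binom{M+2}{\ell}=\binom{M+1}{\ell}+\binom{M+1}{\ell-1}$ and $\binom{M+1}{\ell-1}=\binom{M}{\ell-1}+\binom{M}{\ell-2}$, combine into
$$
\binom{M+1}{\ell}+\binom{M}{\ell-1}=\binom{M+2}{\ell}-\binom{M}{\ell-2},
$$
which is exactly the bracket appearing in Corollary~\ref{coll4}. Absorbing the powers of two via $2^{n-k}\cdot 2^{-(k+1)\ell}=2^{n-(k+1)\ell-k}$ then completes the derivation.

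The most delicate point is the second Pascal step, since formula \eqref{eq2} explicitly excludes the degenerate pair $(a,b)=(0,0)$. This translates into the requirement $(M+1,\ell-1)\neq(0,0)$, whose only failure occurs at $\ell=1$ and $n=2k-1$, precisely the configuration ruled out by the hypothesis. For $k\leq n\leq 2k-1$ the sum range is empty and the claim collapses to $F_n^{(k)}=2^{n-k}$ in agreement with \eqref{eq17}, while for $n\geq 2k$ one always has $M+1\geq 1$ when $\ell=1$, so the Pascal manipulation is valid throughout the sum. The genuine obstacle is therefore only this single boundary configuration, which the hypothesis $n\neq 2k-1$ sidesteps.
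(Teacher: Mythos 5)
Your proof is correct and follows essentially the same route as the paper: both start from Corollary \ref{coll2} and combine the sign-reversal identity \eqref{eq3} with Pascal's rule \eqref{eq2}, you merely applying them in the opposite order (sign flip first, then Pascal twice), which is precisely the equivalent variant the paper itself records in the remark following the corollary. Your analysis of the degenerate Pascal case $(M+1,\ell-1)=(0,0)$, occurring only at $\ell=1$, $n=2k-1$, correctly accounts for the hypothesis $n\neq 2k-1$.
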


\begin{proof}
Let $n , k \in \Z$ be fixed with $n \geq k \geq 2$ and $n \neq 2 k - 1$. According to Formula \eqref{eq2}, we have for all $(a , \ell) \in \Z^2 \setminus \{(0 , 0) , (0 , 1)\}$:
$$
\binom{a}{\ell} - \binom{a}{\ell - 1} = \binom{a - 1}{\ell} - \binom{a - 1}{\ell - 2} .
$$
By applying this last formula for each of the couples $(a , \ell) = ((k + 1) \ell - n + k - 2 , \ell)$ ($0 \leq \ell \leq \frac{n - k + 1}{k + 1}$), which are all different from $(0 , 0)$ and $(0 , 1)$, we transform Formula of Corollary \ref{coll2} to
$$
F_n^{(k)} = 2^{n - k} \sum_{0 \leq \ell \leq \frac{n - k + 1}{k + 1}} \left[\binom{(k + 1) \ell + k - n - 3}{\ell} - \binom{(k + 1) \ell + k - n - 3}{\ell - 2}\right] 2^{- (k + 1) \ell} .
$$
Then, by applying Formula \eqref{eq3} for all the binomial coefficients occurring in the last formula, we conclude to
\begin{align*}
F_n^{(k)} & = 2^{n - k} \sum_{0 \leq \ell \leq \frac{n - k + 1}{k + 1}} (-1)^{\ell} \left[\binom{n - (\ell + 1) k + 2}{\ell} - \binom{n - (\ell + 1) k}{\ell - 2}\right] 2^{- (k + 1) \ell} \\
& = 2^{n - k} + \sum_{1 \leq \ell \leq \frac{n - k + 1}{k + 1}} (-1)^{\ell} \left[\binom{n - (\ell + 1) k + 2}{\ell} - \binom{n - (\ell + 1) k}{\ell - 2}\right] 2^{n - (k + 1) \ell - k} ,
\end{align*}  
as required.
\end{proof}

\begin{rmks}~
\begin{enumerate}
\item In \cite[Theorem 2.4]{hc}, Formula of Corollary \ref{coll4} is stated with the erroneous condition of summation $1 \leq \ell \leq \frac{n - 1}{k + 1}$. This condition coincides with ours only for $k = 2$. Therefore, Corollary \ref{coll4} corrects Theorem 2.4 of \cite{hc}.
\item Applying \eqref{eq3} for all the binomial coefficients appearing in Formula of Corollary \ref{coll2}, we transform this later to
$$
F_n^{(k)} = 2^{n - k} + \sum_{1 \leq \ell \leq \frac{n - k + 1}{k + 1}} (-1)^{\ell} \left[\binom{n - (\ell + 1) k + 1}{\ell} + \binom{n - (\ell + 1) k}{\ell - 1}\right] 2^{n - (k + 1) \ell - k} 
$$
(valid for $n \geq k \geq 2$). Note that the last formula and the formula of Corollary \ref{coll4} have the advantage that the binomial coefficients which appear in them are ordinary (i.e., have nonnegative entries). Note also that the two formulas in question are equivalent (we pass from the one to the other via Formula \eqref{eq2}).
\end{enumerate}
\end{rmks}

\rhead{\textcolor{OrangeRed3}{\it References}}


\begin{thebibliography}{99}
\bibitem{bur}
{\sc H. H. B\"urmann}. Essai de calcul fonctionnaire aux constantes ad-libitum, submitted in 1796 to the {\it Institut National de France}. 

\bibitem{com}
{\sc L. Comtet}. Advanced Combinatorics. The Art of Finite and Infinite Expansions, revised and enlarged ed., {\it D. Reidel Publ. Co.}, Dordrecht, 1974.

\bibitem{dres}
{\sc Gregory P. B. Dresden}. A simplified Binet formula for $k$-generalized Fibonacci numbers, {\it J. Integer Seq.}, {\bf 17} (2014), Article 14.4.7.

\bibitem{hc}
{\sc F. T. Howard \& Curtis Cooper}. Some Identities for r-Fibonacci Numbers,
{\it Fibonacci Quart.}, {\bf 49} (2011), p. 231-243.

\bibitem{lag}
{\sc J-L. Lagrange}. Nouvelle m\'ethode pour r\'esoudre les \'equations litt\'erales par le moyen des s\'eries, {\it Histoire de l'Acad\'emie Royale des Sciences et Belles-Lettres de Berlin}, (1770), p. 251-326.

\bibitem{miles}
{\sc E. P. Miles}. Generalized Fibonacci numbers and associated matrices, {\it Amer. Math. Monthly}, {\bf 67} (1960), p. 745-752.

\bibitem{miller}
{\sc M. D. Miller}. On generalized Fibonacci numbers, {\it Amer. Math. Monthly}, {\bf 78} (1971), p. 1108-1109.

\bibitem{sj}
{\sc W. R. Spickerman \& R. N. Joyner}. Binet's formula for the recursive sequence of order $k$, {\it Fibonacci Quart.}, {\bf 22} (1984), p. 327-331.

\bibitem{stan}
{\sc R. P. Stanley}. {\it Enumerative Combinatorics}, Vol. 2, Cambridge Studies in Advanced Mathematics, 62. Cambridge University Press, Cambridge, 1999.

\bibitem{wol}
{\sc D. A. Wolfram}. Solving generalized Fibonacci recurrences, {\it Fibonacci Quart.}, {\bf 36} (1998), p. 129-145.
\end{thebibliography}
\end{document}